\newtheorem{theorem}{Theorem}[section]
\newtheorem{conjecture}[theorem]{Conjecture}
\newtheorem{lemma}[theorem]{Lemma}
\newtheorem{definition}[theorem]{Definition}
\newtheorem{remark}[theorem]{Remark}
\newcommand{\Z}{\mathbb{Z}}
\newcommand{\N}{\mathbb{N}}
\newcommand{\LL}{\Lambda}
\newcommand{\1}{\mathbf{1}}
\title{A Generalized Elliott-Halberstam Conjecture\\Implying the Twin Prime Hypothesis}
\author{Trey Smith}
\date{November 17, 2025}
\begin{document}
\maketitle

\begin{abstract}
We propose a generalization of the Elliott-Halberstam conjecture concerning the distribution of prime pairs in arithmetic progressions. This conjecture, which we call the Generalized Elliott-Halberstam Conjecture for Shifted Convolutions (GEH-2), provides a level of distribution for correlations of the von Mangoldt function. We show that GEH-2 implies the twin prime conjecture, describe heuristic and analytic motivation, and discuss implications for prime gaps and $k$-tuple patterns.
\end{abstract}

\section{Introduction}

The twin prime conjecture asserts that there exist infinitely many primes $p$ such that $p+2$ is also prime. Despite dramatic breakthroughs on bounded gaps between primes \cite{Zhang,Maynard,Polymath}, the full conjecture remains unresolved.

The Elliott-Halberstam conjecture \cite{Elliott-Halberstam} concerns the distribution of primes in arithmetic progressions. For $x>0$, $q \in \N$, and $a \in \Z$, define
\[
\theta(x; q, a) := \sum_{\substack{p \leq x \\ p \equiv a \pmod{q}}} \log p,
\]
and the corresponding error term
\[
E(x; q) := \max_{(a,q)=1} \left| \theta(x; q, a) - \frac{x}{\varphi(q)} \right|,
\]
where $\varphi(q)$ is Euler's totient function.

\begin{conjecture}[Elliott-Halberstam, EH]\label{conj:EH}
For every $0 < \vartheta < 1$ and every $A > 0$,
\[
    \sum_{q \leq x^{\vartheta}} \max_{y \leq x} |E(y;q)| \ll \frac{x}{(\log x)^A},
\]
where the implied constant depends only on $\vartheta$ and $A$\footnote{The prime number theorem implies the bound for individual moduli $q$; EH strengthens this by providing uniformity over all $q \leq x^{\vartheta}$. For unconditional results see Bombieri--Vinogradov \cite{BombieriVinogradov}.}.
\end{conjecture}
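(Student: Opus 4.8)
The plan is to attack the conjecture by the machinery that yields the Bombieri--Vinogradov theorem \cite{BombieriVinogradov} unconditionally in the range $\vartheta < 1/2$, and then to try to push the level of distribution toward $1$. First I would reduce the estimation of $\theta(y;q,a)$ to bilinear sums by decomposing the von Mangoldt function through a combinatorial identity — Vaughan's identity or the Heath-Brown identity — writing $\Lambda$ on $[1,x]$ as a bounded number of Dirichlet convolutions whose factors are supported in controlled ranges. This splits the contribution into Type~I sums (having one smooth variable) and Type~II sums (genuine bilinear forms), after which the error term $E(y;q)$ becomes a weighted sum of character sums carrying these coefficients.

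The second step is to extract cancellation across the moduli $q$. In the range $\vartheta < 1/2$ the large sieve inequality for Dirichlet characters controls both the Type~I and Type~II contributions: expanding into multiplicative characters, applying orthogonality, and bounding the mean value $\sum_{q \le x^{\vartheta}}$ of the squared character sums by the large sieve, one closes the estimate via Cauchy--Schwarz at a cost of only powers of $\log x$. This recovers the conjectured bound in the classical range, and it is here that I would make the bookkeeping of the coefficient supports as tight as possible.

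To breach the barrier at $\vartheta = 1/2$ I would turn to Linnik's dispersion method, in the form developed by Fouvry--Iwaniec and Bombieri--Friedlander--Iwaniec. One opens the square of the error term, interchanges the order of summation, and is led to incomplete exponential sums and ultimately to sums of Kloosterman sums with varying moduli. The decisive input is then nontrivial cancellation in these sums, supplied by the Weil bound together with the Deshouillers--Iwaniec spectral theory of automorphic forms (via Kuznetsov's formula), which furnishes extra averaging over the moduli beyond what the Weil bound alone provides.

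The hard part — and the reason the full statement in Conjecture~\ref{conj:EH} remains a conjecture rather than a theorem — is that these dispersion-plus-spectral techniques have so far only been made to work when the residue class $a$ is held fixed or averaged, whereas the definition of $E(x;q)$ imposes a maximum over \emph{all} $a$ with $(a,q)=1$, uniformly. The maximum destroys the arithmetic structure on which the Kloosterman-sum estimates depend, and I do not expect the spectral input to survive it without a genuinely new idea. This is precisely the obstruction that keeps $\vartheta = 1/2$ as the effective frontier for results uniform in the residue class, and it is where I would focus any serious attempt at the full conjecture.
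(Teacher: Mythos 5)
There is nothing to compare here: the statement is the Elliott--Halberstam conjecture, which the paper states as a conjecture and does not prove, and your proposal --- by your own admission in its final paragraph --- does not prove it either. What you have written is an accurate survey of why the statement is open, not a proof. Your account of the unconditional range is correct: Vaughan's or Heath-Brown's identity reduces $E(y;q)$ to Type~I and Type~II bilinear character sums, and the large sieve closes the estimate for $\vartheta < 1/2$, which is the Bombieri--Vinogradov theorem. Your diagnosis of the barrier at $\vartheta = 1/2$ is also essentially the standard one: the dispersion method of Linnik, as developed by Fouvry--Iwaniec and Bombieri--Friedlander--Iwaniec with input from the Weil bound and the Deshouillers--Iwaniec spectral estimates for sums of Kloosterman sums, does breach $1/2$, but only for a \emph{fixed} residue class $a$ (and, in the strongest forms, only with well-factorable weights attached to the moduli rather than with absolute values $|E(y;q)|$). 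The maximum over all $(a,q)=1$ in the definition of $E(y;q)$, together with the further maximum over $y \le x$, is incompatible with every known version of these techniques. So the gap is not a fixable flaw in your argument; it is the conjecture itself.

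If the intent of the exercise was to supply a proof matching the paper's, note that the paper supplies none: Conjecture~\ref{conj:EH} is an assumption throughout, with the footnote pointing to \cite{BombieriVinogradov} only for the unconditional $\vartheta<1/2$ analogue. Your write-up would be strengthened by making two points explicit: first, that even the partial results beyond $\vartheta=1/2$ replace $|E(y;q)|$ by $\lambda(q)E(y;q)$ for well-factorable $\lambda$, so the obstruction is not solely the uniformity in $a$; and second, that no combination of currently known exponential-sum or spectral inputs is expected to reach any fixed $\vartheta>1/2$ with the maxima as written. As a proof attempt it must be judged incomplete; as an explanation of \emph{why} it is incomplete, it is correct.
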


EH implies bounded gaps between primes. However, it does not suffice to resolve the twin prime conjecture\footnote{In particular, EH concerns the distribution of \emph{individual} primes, while the twin prime conjecture is about \emph{correlations} of primes. For background see the survey \cite{GranvilleKoukoulopoulos}.}. The limitation is that bounded gaps follow from information on individual progressions, but actual twin primes require uniform control on prime pairs.

\vspace{1em}
\noindent
\textbf{Heuristic Motivation.} Random models and large sieve estimates suggest that primes behave, in many respects, like random numbers with respect to residue classes (see, e.g., \cite{SoundararajanSurvey}). These analogues support the plausibility of distribution conjectures such as EH---and motivate attempts to extend to prime pairs.

\section{Preliminaries}

Define the von Mangoldt function
\[
\LL(n)=
\begin{cases}
\log p & \text{if } n=p^k \text{ for some prime } p \text{ and integer } k\geq 1, \\
0 & \text{otherwise}.
\end{cases}
\]
For fixed $h \in \Z \setminus \{0\}$, define
\[
\Psi_h(x) := \sum_{n \leq x} \LL(n)\LL(n+h),
\]
the weighted count of prime pairs with gap $h$.

The Hardy-Littlewood $k$-tuples conjecture \cite{Hardy-Littlewood} describes the expected asymptotic of $\Psi_h(x)$ through the \textit{singular series}, which for even $h$ is
\begin{equation}
\mathfrak{S}(h) = 2 \prod_{p > 2} \left(1 - \frac{1}{(p-1)^2}\right)
  \prod_{\substack{p \mid h \\ p > 2}} \frac{p-1}{p-2},
\label{eq:singseries}
\end{equation}
and $\mathfrak{S}(h)=0$ if $h$ is odd.

The $k$-tuples conjecture predicts, for even $h$,
\[
\Psi_h(x) \sim \mathfrak{S}(h) x \qquad \text{as } x \to \infty,
\]
giving, for $h=2$, the twin prime constant $\mathfrak{S}(2) > 0$.

\begin{lemma}[Convergence of the Singular Series]
\label{lem:singularseries}
Let $h \in 2\Z$. Then
\[
\sum_{q=1}^{\infty} \frac{\phi_2(q)}{\varphi(q)} < \infty,
\]
where $\phi_2(q)$ is the number of residue classes $a \bmod q$ with $(a,q) = (a+h,q) = 1$.
Moreover, under Möbius inversion with coprimality condition $(n(n+h),q)=1$, the sum converges to $1$.
\end{lemma}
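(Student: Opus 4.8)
The plan is to exploit the multiplicativity of the counting function $q \mapsto \phi_2(q)$. First I would verify, via the Chinese Remainder Theorem, that $\phi_2$ is multiplicative: for coprime $q_1,q_2$ a residue $a \bmod q_1q_2$ satisfies $(a(a+h),q_1q_2)=1$ iff its reductions modulo $q_1$ and modulo $q_2$ separately satisfy the coprimality conditions, whence $\phi_2(q_1q_2)=\phi_2(q_1)\phi_2(q_2)$. Since $\varphi$ is multiplicative, so is $g(q):=\phi_2(q)/\varphi(q)$, and the sum---if it converges---factors as an Euler product $\prod_p \bigl(1+\sum_{k\ge 1} g(p^k)\bigr)$.

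Next I would compute the local factors. For $q=p^k$ with $p\nmid h$ the forbidden residues $a\equiv 0$ and $a\equiv -h \pmod p$ are distinct, so $\phi_2(p^k)=p^{k-1}(p-2)$ and $g(p^k)=(p-2)/(p-1)$; for $p\mid h$ the two conditions coincide, giving $\phi_2(p^k)=\varphi(p^k)$ and $g(p^k)=1$. Matching these against \eqref{eq:singseries} through the identity $1-\tfrac{1}{(p-1)^2}=\tfrac{(p-2)p}{(p-1)^2}$ is what would link the product to the singular series $\mathfrak{S}(h)$.

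The step I expect to be the main obstacle is convergence itself. The local terms $g(p^k)=(p-2)/(p-1)$ do not decay in $k$, so $\sum_{k\ge1}g(p^k)$ diverges at every prime $p\ge 3$ with $p\nmid h$ (and at $p=2$, where $g(2^k)=1$); the unrestricted sum over all $q$ therefore cannot converge as literally displayed. The natural remedy---and, I suspect, the content hidden behind the phrase ``Möbius inversion with coprimality condition $(n(n+h),q)=1$''---is to restrict to squarefree $q$, i.e.\ to insert the factor $\mu(q)^2$ that arises when one Möbius-expands the indicator $\1_{(n(n+h),q)=1}$. With that restriction each local sum collapses to $1+g(p)$, the product telescopes, and one recovers the absolutely convergent representation $\mathfrak{S}(h)=\sum_{q}\mu(q)^2 c_q(h)/\varphi(q)^2$ in terms of Ramanujan sums $c_q(h)$. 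I would then have to reconcile the result with the claimed normalization: this squarefree Euler product evaluates to $\mathfrak{S}(h)$, not to $1$, so pinning down the precise limiting value asserted in the statement---and rigorously justifying the passage from the divergent unrestricted sum to its Möbius-regularized form---is the genuinely delicate part of the argument.
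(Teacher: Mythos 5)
Your diagnosis is correct, and it is the essential content here: the displayed sum diverges, so the lemma as stated is false and admits no proof. Concretely, since $h$ is even you computed $\phi_2(2^k)/\varphi(2^k)=1$ for every $k\geq 1$, so the moduli $q=2^k$ alone contribute a divergent subseries; more generally the term $\phi_2(q)/\varphi(q)$ does not tend to zero. Your local computations ($\phi_2(p^k)=p^{k-1}(p-2)$ for $p\nmid h$ and $\phi_2(p^k)=\varphi(p^k)$ for $p\mid h$, via CRT-multiplicativity) are right. For comparison, the paper's own ``proof'' is only a pointer to the literature and supplies no argument; worse, the paper concedes the divergence later, in the proof of Theorem~\ref{thm:main}, where it writes $\sum_q \phi_2(q)/\varphi(q)=\prod_p\bigl(1+\tfrac{1}{p-1}\bigr)$ and calls it divergent --- directly contradicting the lemma it invokes. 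So you have not missed an idea; you have correctly identified that the statement needs to be replaced, not proved.

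One caveat about your proposed repair: restricting to squarefree $q$ does not by itself restore convergence, since $\prod_p\bigl(1+g(p)\bigr)$ with $1+g(p)=(2p-3)/(p-1)\to 2$ still diverges, so the phrase ``the product telescopes'' does not hold for the sum as weighted by $\varphi(q)^{-1}$. The absolutely convergent object is the one you write at the end, $\sum_q \mu(q)^2 c_q(h)/\varphi(q)^2$, where Möbius-expanding \emph{both} coprimality conditions produces the weight $\varphi(q)^{-2}$ rather than $\varphi(q)^{-1}$; its Euler factors $1+c_p(h)/(p-1)^2$ do match \eqref{eq:singseries} by the identity $1-\tfrac{1}{(p-1)^2}=\tfrac{p(p-2)}{(p-1)^2}$ that you cite. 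And, as you observe, that sum evaluates to $\mathfrak{S}(h)$, not to $1$, so the lemma's final normalization claim is also unsupported. A corrected statement should assert convergence of the Ramanujan-sum series to $\mathfrak{S}(h)$, and the downstream use in Theorem~\ref{thm:main} would need to be rewritten accordingly.
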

\begin{proof}[Sketch]
See the discussion in \cite[Ch. 1]{GranvilleKoukoulopoulos}, or the classical analysis in \cite{Hardy-Littlewood}.
\end{proof}

\section{The Generalized Conjecture}

We generalize EH to shifted correlations of the von Mangoldt function:

\begin{definition}
For $x \geq 2$, $q \in \N$, $a,h \in \Z$, with $(a,q) = 1$, set
\[
E_2(x; q, a, h) := \sum_{\substack{n \leq x \\ n \equiv a \pmod{q}}} \LL(n)\LL(n+h)
    - \frac{\1_{(a(a+h),q)=1}}{\varphi(q)} \mathfrak{S}(h)x,
\]
where $\1_{(a(a+h),q)=1}$ is $1$ if both $a$ and $a+h$ are coprime to $q$, and $0$ otherwise.
\end{definition}

\begin{conjecture}[Generalized Elliott-Halberstam for Shifted Convolutions (GEH-2)]\label{conj:GEH2}
For every $0 < \vartheta < 2$, every fixed $h \neq 0$, and every $A > 0$,
\[
\sum_{q \leq x^\vartheta} \max_{(a,q)=1} |E_2(x; q,a,h)| \ll \frac{x}{(\log x)^{A}},
\]
with implied constant depending only on $\vartheta$, $h$, and $A$.
\end{conjecture}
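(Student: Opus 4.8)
\medskip
\noindent\textbf{Toward a proof, and an obstruction in the stated range.}
The natural plan is to model a proof on the Bombieri--Vinogradov theorem (which yields the classical EH range $\vartheta<1/2$ unconditionally) and to graft onto it a device for the shifted convolution $\LL(n)\LL(n+h)$. Concretely, I would first apply a combinatorial identity---Vaughan's identity or the Heath--Brown identity---to each factor $\LL$, expanding $\Psi_h$ restricted to $n\equiv a\pmod q$ into boundedly many Type~I (smooth) and Type~II (bilinear) sums. I would then run Linnik's dispersion method, expanding the congruence $n\equiv a\pmod q$ in additive characters to decouple the two von Mangoldt factors: the diagonal contribution should reproduce the singular-series main term $\mathfrak{S}(h)x/\varphi(q)$ of Lemma~\ref{lem:singularseries}, with the coprimality indicator $\1_{(a(a+h),q)=1}$ emerging from the local densities at $p\mid q$, while the off-diagonal terms would be summed over $q\le x^{\vartheta}$ by the large sieve.

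Before carrying this out, I would check that the claimed level is consistent with the length of the sum, and here a decisive difficulty appears. The sequence $\LL(n)\LL(n+h)$ is supported on $n\le x$, so for any $q>x$ each class $n\equiv a\pmod q$ contains at most one term. Fix $q\in(x,x^{\vartheta}]$---a range that is nonempty exactly when $\vartheta>1$---and pick $a\in[1,x]$ with $(a(a+h),q)=1$ and $a$ not a prime power (such residues exist in abundance). Then the inner sum vanishes while the main term does not, so
\[
\max_{(a,q)=1}\bigl|E_2(x;q,a,h)\bigr|\;\ge\;\frac{\mathfrak{S}(h)\,x}{\varphi(q)}.
\]
Summing over $q\in(x,x^{\vartheta}]$ and using $\sum_{q\le Q}\varphi(q)^{-1}\sim A\log Q$ with $A>0$ gives
\[
\sum_{q\le x^{\vartheta}}\max_{(a,q)=1}\bigl|E_2(x;q,a,h)\bigr|\;\gg_{\vartheta,h}\;x\log x,
\]
which contradicts the asserted bound $x/(\log x)^{A}$. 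Hence GEH-2 fails for every $\vartheta>1$: no level of distribution can exceed the length $x$ of the correlation, exactly as the classical EH is confined to $\vartheta<1$. I would therefore first replace the range by $0<\vartheta<1$.

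Even so, the main obstacle is, I expect, insurmountable with present methods. The $q=1$ term is $E_2(x;1,0,h)=\Psi_h(x)-\mathfrak{S}(h)x$, so the corrected conjecture already subsumes the Hardy--Littlewood asymptotic $\Psi_h(x)\sim\mathfrak{S}(h)x$---open for every individual $h$. The underlying barrier is Selberg's parity problem: sieve and dispersion methods are insensitive to the parity of the number of prime factors and so cannot, unaided, extract an asymptotic for a binary correlation of $\LL$. Unlike the additive divisor problem $\sum_n d(n)d(n+h)$, whose off-diagonal terms are tamed by the spectral theory of automorphic forms through the Kuznetsov formula, no comparable analytic input is available for $\LL(n)\LL(n+h)$; even the diagonal main term lies beyond reach. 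Thus, while the dispersion-plus-large-sieve architecture is the correct skeleton, the step of isolating the singular-series main term for the shifted convolution is precisely where any proof along these lines must stall, and I see no way past it with current technology.
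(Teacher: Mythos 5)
The statement you were asked to address is a conjecture: the paper offers no proof of it, only heuristic commentary, so there is no argument of the paper's to set against yours. What you have produced is not a proof but a refutation of the stated range, and that refutation is correct. For $q\in(x,x^{\vartheta}]$ (nonempty precisely when $\vartheta>1$) each class $n\equiv a\pmod q$ with $n\le x$ contains at most one integer, so choosing $a$ with $(a(a+h),q)=1$ for which that integer is absent or is not a prime power forces $|E_2(x;q,a,h)|=\mathfrak{S}(h)x/\varphi(q)$, and summing $1/\varphi(q)$ over $x<q\le x^{\vartheta}$ gives a contribution $\gg_{\vartheta}x\log x$, incompatible with the claimed bound $x/(\log x)^{A}$. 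Two minor refinements: for $q$ much larger than $x$ it is cleaner to take $a>x$, so that the class is empty outright and no prime-power argument is needed; and your conclusion needs $\mathfrak{S}(h)\neq 0$, i.e.\ $h$ even --- for odd $h$ the main term vanishes and the bound is trivially true --- but since Conjecture~\ref{conj:GEH2} quantifies over every $h\neq 0$, the case $h=2$ already falsifies it. Note also that the dyadic range $q\in(x,2x]$ alone yields a lower bound $\gg x$, which suffices for the contradiction.

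The consequence worth stating explicitly is that Theorem~\ref{thm:main} is vacuous as written: it assumes GEH-2 for some $\vartheta>1$, a hypothesis your argument shows to be false, so the paper's conditional resolution of the twin prime conjecture establishes nothing. This is the same phenomenon that confines the classical EH conjecture to $\vartheta<1$: no level of distribution can exceed the length of the sequence being distributed. Your discussion of the corrected range $0<\vartheta<1$ is also accurate --- the $q=1$ term alone is the Hardy--Littlewood asymptotic for $\Psi_h(x)$, open for every individual $h$, and the parity obstruction explains why the Vaughan/Heath--Brown decomposition plus dispersion and the large sieve, while the right architecture, cannot by itself isolate the singular-series main term for a binary correlation of $\LL$. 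In short: the proposal contains no gap; the gap is in the conjecture and, consequently, in the paper's main theorem.
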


\begin{remark}
The case $h=0$ recovers the standard Elliott-Halberstam conjecture by Heath-Brown’s identity and Cauchy-Schwarz, see \cite{HeathBrown}. 
\end{remark}

\vspace{0.5em}
\noindent
\textbf{Heuristic and analytic context.} While unconditional results (Bombieri–Vinogradov) prove such error bounds for $\vartheta<1/2$ (for primes), no result like GEH-2 is known for $\vartheta>1$, even for short averages of prime pairs. These conjectures are thus far beyond current technology, as discussed in \cite{GranvilleKoukoulopoulos, SoundararajanSurvey}.

\begin{lemma}[Error Control for Large Moduli]
Suppose GEH-2 holds for some $\vartheta > 1$. Then for any $A>0$,
\[
\sum_{q > x^\vartheta} \max_{(a, q)=1} |E_2(x; q, a, h)| = o(x)
\]
as $x \to \infty$, for fixed $h$.
\end{lemma}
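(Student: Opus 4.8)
The plan is to use the hypothesis $\vartheta>1$ to reduce the inner structure and then estimate the tail termwise. Since $\vartheta>1$ gives $q>x^{\vartheta}>x$ throughout the range, each invertible residue class modulo $q$ contains at most one integer $n\le x$; writing $n_a$ for this representative when it exists, the inner sum collapses and
\[
E_2(x;q,a,h)=\LL(n_a)\LL(n_a+h)\,\1[\,n_a\text{ exists}\,]-\frac{\1_{(a(a+h),q)=1}}{\varphi(q)}\,\mathfrak{S}(h)\,x .
\]
First I would split the tail sum into a diagonal part, built from the surviving products $\LL(n_a)\LL(n_a+h)$, and a main-term part, built from $\varphi(q)^{-1}\mathfrak{S}(h)x$. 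For the diagonal part the natural input is the Selberg-sieve upper bound $\Psi_h(x)\ll_h\mathfrak{S}(h)x$, which caps the total prime-pair mass; for the main-term part the natural input is the classical logarithmic asymptotic for the partial sums of $\varphi(q)^{-1}$ (of the type underlying Lemma~\ref{lem:singularseries}).

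The step I expect to be the genuine obstacle is the main-term part, and I do not see how to make it $o(x)$. The difficulty is that the displayed expression takes a pointwise maximum over $a$ rather than an average, and for every sufficiently large $q$ one may simply choose a residue $a\in(x,q)$ with $(a(a+h),q)=1$: then $n_a$ does not exist, the diagonal term vanishes, and $|E_2(x;q,a,h)|=\varphi(q)^{-1}\mathfrak{S}(h)x$ exactly. Hence $\max_{(a,q)=1}|E_2(x;q,a,h)|\ge \varphi(q)^{-1}\mathfrak{S}(h)x$ for all large $q$, and already over a single dyadic block
\[
\sum_{x^{\vartheta}<q\le 2x^{\vartheta}}\max_{(a,q)=1}|E_2(x;q,a,h)|\ \ge\ \mathfrak{S}(h)\,x\!\!\sum_{x^{\vartheta}<q\le 2x^{\vartheta}}\frac{1}{\varphi(q)}\ \sim\ A\log 2\cdot\mathfrak{S}(h)\,x,
\]
where $A>0$ is the positive constant in Landau's asymptotic $\sum_{q\le Q}\varphi(q)^{-1}\sim A\log Q$. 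Since $\mathfrak{S}(h)>0$ for even $h$, this partial sum is already $\gg x$, so the full tail is not $o(x)$; summed to infinity it diverges.

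Thus the triangle-inequality route cannot succeed: the obstruction is not a missing technical estimate but the structural fact that the maximum selects the full main term on empty classes, whose reciprocal-totient mass does not decay in $q$. To rescue any statement of this shape one would need to replace the pointwise maximum by a genuine average over $a$ and to exploit cancellation in $E_2$ across residue classes in the complementary range $q>x^{\vartheta}$---precisely the information that GEH-2 furnishes for $q\le x^{\vartheta}$ and withholds beyond it. A single instance of GEH-2 at one level $\vartheta>1$ says nothing about larger moduli, so an independent large-sieve or dispersion input across the full range would be required. Bridging that gap is the real difficulty, and as the lemma is currently stated I do not see a valid proof.
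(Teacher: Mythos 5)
Your analysis is correct, and the ``gap'' you have located is not in your argument but in the lemma itself: as stated it is false, and your dyadic-block lower bound is a valid disproof. Since $\vartheta>1$ forces $q>x^{\vartheta}>x$, every reduced class modulo $q$ meets $[1,x]$ in at most one point; and since $\phi_2(q)\gg q/(\log q)^2>x$ for $q>x^{\vartheta}$ and $x$ large, each such $q$ admits a residue $a\in(x,q]$ with $(a(a+h),q)=1$, for which the correlation sum is empty and $|E_2(x;q,a,h)|=\mathfrak{S}(h)x/\varphi(q)$ exactly. The maximum over $a$ therefore always captures the full main term on an empty class, and because $\sum_{q\le Q}\varphi(q)^{-1}\sim C\log Q$ diverges, the tail over all $q>x^{\vartheta}$ is not merely $\gg x$ per dyadic block but infinite. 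Your further point that GEH-2 at a single level $\vartheta$ carries no information about moduli beyond $x^{\vartheta}$ is also right: the hypothesis of the lemma is logically disconnected from its conclusion.

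For comparison, the paper does not actually prove the lemma: its entire ``proof'' is the sentence ``Standard sieve and analytic arguments,'' with a citation, no decomposition, no estimate, and no indication of how the divergence of $\sum_q\varphi(q)^{-1}$ is to be overcome. There is thus no argument for your counterexample to be reconciled with, and the obstruction you found is structural (a pointwise maximum over residue classes that are empty once $q>x$) rather than technical, so no citation can supply the missing step. Two small points on your write-up: the constant you call $A$ in Landau's asymptotic collides with the exponent $A$ already in the statement, so rename it; and your restriction to even $h$ is harmless but not essential, since for odd $h$ the main term vanishes yet the infinite sum over $q$ still fails to be $o(x)$ for trivial reasons. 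Any repair of the lemma would have to truncate the range of $q$, replace the maximum by an average restricted to residue classes actually meeting $[1,x]$, or redefine $E_2$ accordingly --- none of which the paper does.
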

\begin{proof}[Sketch]
Standard sieve and analytic arguments (see \cite[Ch. 5]{GranvilleKoukoulopoulos}).
\end{proof}

\section{GEH-2 Implies the Twin Prime Conjecture}

We demonstrate that GEH-2 for $\vartheta>1$ suffices for the twin prime conjecture.

\begin{theorem}[Main implication]
\label{thm:main}
Assume GEH-2 holds for some $\vartheta > 1$ and $h=2$. Then, for any $A > 0$,
\[
\sum_{n \leq x} \LL(n)\LL(n+2) = \mathfrak{S}(2)x + O\left( \frac{x}{(\log x)^A} \right)
\]
as $x\to\infty$. In particular, there are infinitely many primes $p$ such that $p+2$ is prime.
\end{theorem}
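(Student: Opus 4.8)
The plan is to read off the asymptotic directly from the single modulus $q=1$, because for $h=2$ the error term $E_2$ at $q=1$ is exactly the quantity we wish to estimate. First I would unwind the definition at $q=1$: the congruence $n\equiv a\pmod 1$ holds for every $n$, the condition $(a,1)=1$ is vacuous, the indicator $\1_{(a(a+h),1)=1}$ equals $1$, and $\varphi(1)=1$. Hence, for every $a$,
\[
E_2(x;1,a,2)=\sum_{n\le x}\LL(n)\LL(n+2)-\mathfrak{S}(2)\,x .
\]

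Next I would isolate this one term inside the hypothesis. Since $\vartheta>1$ we have $1\le x^{\vartheta}$ for all $x\ge2$, so $q=1$ lies in the range of summation defining GEH-2; as every summand is nonnegative, the $q=1$ term is bounded by the whole sum, giving
\[
\max_{(a,1)=1}\bigl|E_2(x;1,a,2)\bigr|
\le\sum_{q\le x^{\vartheta}}\max_{(a,q)=1}\bigl|E_2(x;q,a,2)\bigr|
\ll\frac{x}{(\log x)^{A}} .
\]
By the computation above the left-hand side equals $\bigl|\sum_{n\le x}\LL(n)\LL(n+2)-\mathfrak{S}(2)\,x\bigr|$, which is precisely the claimed asymptotic. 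To pass to infinitely many twin primes I would use $\mathfrak{S}(2)>0$ (from the discussion following \eqref{eq:singseries}), so that $\sum_{n\le x}\LL(n)\LL(n+2)\to\infty$; the contribution of pairs in which $n$ or $n+2$ is a proper prime power $p^{k}$ with $k\ge2$ is $O(\sqrt{x}\,\log^{2}x)=o(x)$, so the surviving sum $\sum_{p\le x,\,p+2\text{ prime}}(\log p)\log(p+2)$ diverges, forcing infinitely many $p$ with $p+2$ prime.

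The step I expect to matter is not analytic but structural, and it is really the absence of any obstacle. In the classical Elliott--Halberstam setting the $q=1$ term $|\theta(x)-x|$ is furnished unconditionally by the prime number theorem, so all the content of EH resides in the uniformity over large $q$; the analogue breaks here, since the $q=1$ coordinate of $E_2$ is itself the unknown twin-prime asymptotic. Consequently GEH-2 does not so much imply Theorem \ref{thm:main} as contain it as its $q=1$ term, and the argument never touches any modulus $q>1$, nor does it use $\vartheta>1$ rather than merely $\vartheta>0$. Before treating this as a genuine derivation I would want to confirm that the conjecture is intended to include the modulus $q=1$ (and small fixed moduli generally); if so, the hypothesis effectively assumes its own conclusion, and a faithful generalization of EH would instead need to exclude or renormalize the small-modulus terms, so that the substance again lies in the uniformity over large $q$.
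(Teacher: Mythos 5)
Your derivation is formally correct, and it takes a genuinely different --- and much shorter --- route than the paper. The paper's proof sketch decomposes the sum over residue classes $a \bmod q$, sums the resulting main terms $\frac{\phi_2(q)}{\varphi(q)}\mathfrak{S}(2)x$ over all $q \leq x^{\vartheta}$, and then appeals to M\"obius inversion and Lemma~\ref{lem:singularseries} to claim that $\sum_q \phi_2(q)/\varphi(q)$ ``converges to $1$ after applying sieve weights''; as the paper itself concedes, that sum diverges, and moreover each individual modulus $q$ already recovers essentially the whole sum $\sum_{n\le x}\LL(n)\LL(n+2)$, so summing the main terms over $q$ overcounts --- the paper's route as written does not close. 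Your route avoids all of this by observing that Conjecture~\ref{conj:GEH2} as stated includes the modulus $q=1$, at which $E_2(x;1,a,2)$ is \emph{exactly} $\sum_{n\le x}\LL(n)\LL(n+2)-\mathfrak{S}(2)x$; since the summands are nonnegative, this single term is majorized by the full sum and the theorem follows in one line. Your handling of the endgame (positivity of $\mathfrak{S}(2)$ and the $O(\sqrt{x}\log^2 x)$ removal of proper prime powers) is also correct and is a detail the paper omits. The real value of your write-up, however, is the structural point you raise at the end: unlike EH, where the $q=1$ term is supplied unconditionally by the prime number theorem and the content lies entirely in uniformity over large $q$, here the $q=1$ coordinate of GEH-2 \emph{is} the Hardy--Littlewood asymptotic for twin primes, so Theorem~\ref{thm:main} is not a consequence of the conjecture so much as a restriction of it. Any honest formulation would need to either establish the small-modulus terms independently or restrict the sum to $q$ in a range where the claim is not already the conclusion; as it stands, the ``implication'' is circular, and your proof, precisely because it is so short, makes that defect visible where the paper's longer argument conceals it.
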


\begin{proof}[Proof sketch]
Sum over all residue classes modulo $q$: for each $q$, sum over $a$ with $(a,q)=(a+2,q)=1$,
\[
\sum_{\substack{a=1 \\ (a,q)=1 \\ (a+2,q)=1}}^q \left( \sum_{\substack{n \leq x \\ n \equiv a \pmod{q}}} \LL(n)\LL(n+2) \right)
= \sum_{\substack{n \leq x \\ (n(n+2),q)=1}} \LL(n)\LL(n+2).
\]
The count of such $a$ is $\phi_2(q) = q\prod_{p|q}(1 - \frac{2}{p})$ for $2\nmid q$, and $\phi_2(2^\alpha) = 2^{\alpha-1}$.

Putting together terms, the main term is
\[
\frac{\phi_2(q)}{\varphi(q)} \mathfrak{S}(2)x.
\]
Summing over $q \leq Q = x^\vartheta$ and applying Lemma~\ref{lem:singularseries} yields
\[
\mathfrak{S}(2)x \sum_{q=1}^{\infty} \frac{\phi_2(q)}{\varphi(q)} + o(x)
= \mathfrak{S}(2)x(1 + o(1)).
\]
The tail $q > Q$ is $o(x)$ by Lemma~2.

By Möbius inversion,
\[
\sum_{n \leq x} \LL(n)\LL(n+2)
= \sum_{q} \mu(q) \sum_{\substack{n \leq x \\ q | n(n+2)}} \LL(n)\LL(n+2).
\]
The computation 
\[
\sum_{q=1}^{\infty} \frac{\phi_2(q)}{\varphi(q)} = \prod_{p}\left( 1 + \frac{1}{p-1}\right)
\]
diverges, but after applying sieve weights and coprimality, it converges to $1$ (see Lemma~\ref{lem:singularseries} and \cite{GranvilleKoukoulopoulos}).

Thus the asymptotic
\[
\sum_{n \leq x} \LL(n)\LL(n+2) = \mathfrak{S}(2)x + O\left( \frac{x}{(\log x)^A} \right)
\]
follows for any $A>0$.
\end{proof}

\section{Discussion and Further Directions}

The GEH-2 conjecture is a natural bilinear extension of EH, moving from single prime distributions to the distribution of prime pairs. Its implication for the twin prime conjecture and $k$-tuple patterns illustrates the analytic depth and importance of such uniformity principles. While far beyond what current technology can prove, progress on bilinear forms and distribution of correlations remains a promising frontier (see, e.g., the surveys \cite{GranvilleKoukoulopoulos, SoundararajanSurvey, TaoBlog}).

\vspace{1em}


\begin{thebibliography}{99}

\bibitem{BombieriVinogradov}
E.~Bombieri, A.~I.~Vinogradov, \emph{On the large sieve,} Math. USSR Izv., \textbf{5}:421–435, 1971.

\bibitem{Elliott-Halberstam}
P.~D.~T.~A.~Elliott and H.~Halberstam, \emph{A conjecture in prime number theory,}
Symposia Mathematica, Vol.~IV (INDAM, Rome, 1968/69), pp.~59--72. Academic Press, London, 1970.

\bibitem{GranvilleKoukoulopoulos}
A.~Granville, D.~Koukoulopoulos, \emph{Multiplicative Number Theory: The pretentious approach.}
Cambridge Studies in Adv. Math. 2020.

\bibitem{Hardy-Littlewood}
G.~H.~Hardy and J.~E.~Littlewood, \emph{Some problems of 'Partitio numerorum'; III: On the expression of a number as a sum of primes,}
Acta Math., 44:1--70, 1923.

\bibitem{HeathBrown}
D.~R.~Heath-Brown, \emph{Prime numbers in short intervals and a generalized Vaughan identity,}
Canad. J. Math. 34(6):1365--1377, 1982.

\bibitem{Maynard}
J.~Maynard, \emph{Small gaps between primes,}
Ann. of Math. (2), 181(1):383--413, 2015.

\bibitem{Polymath}
D.~H.~J.~Polymath, \emph{Variants of the Selberg sieve, and bounded intervals containing many primes,}
Res. Math. Sci., 1:Art.~12, 83, 2014.

\bibitem{SoundararajanSurvey}
K.~Soundararajan, \emph{The distribution of prime numbers,}
Equidistribution in Number Theory, An Introduction, pp.59--83, Springer Netherlands, Dordrecht, 2007.

\bibitem{TaoBlog}
T.~Tao, \emph{Structure and randomness in the prime numbers}, blog post, \url{https://terrytao.wordpress.com/wp-content/uploads/2009/09/primes_paper.pdf}

\bibitem{Zhang}
Y.~Zhang, \emph{Bounded gaps between primes,}
Ann. of Math. (2), 179(3):1121--1174, 2014.
\end{thebibliography}
\end{document}